\documentclass[12pt]{amsart}
\usepackage{xfrac} 
\usepackage{fullpage}
\usepackage{amssymb, comment, color}
\usepackage[toc,page]{appendix}
\usepackage{epsf,overpic,amssymb,amscd,times,euscript}
\usepackage{faktor}
\usepackage[utf8]{inputenc}
\usepackage[english]{babel}
\usepackage{amsmath}
\usepackage{amsthm}
\usepackage{amsfonts}
\usepackage{amssymb}
\usepackage{amscd}
\usepackage{bezier}
\usepackage{enumerate}
\usepackage{siunitx}
\usepackage{url}

\usepackage{breakurl}
\usepackage[breaklinks]{hyperref}

\usepackage{graphicx}

\usepackage{upgreek}
\numberwithin{equation}{section}

\usepackage[vmargin=1in,hmargin=1in]{geometry}
\usepackage{enumitem}
\setlist[description]{style=nextline}
\setlist[itemize,enumerate]{leftmargin=*,itemsep=0pt,parsep=0pt}
\tolerance=1
\emergencystretch=\maxdimen
\hyphenpenalty=10000
\hbadness=10000

\usepackage{parskip}

\usepackage{bbm}

\newtheorem{mainthm}{\sc Theorem}           
\newtheorem{thm}{Theorem}[section]               
\newtheorem*{thm*}{Theorem}               
\newtheorem*{cor*}{Corollary}        
\newtheorem{lem}[thm]{Lemma}  
\newtheorem*{lem*}{Lemma}
\newtheorem{prop}[thm]{Proposition}     
\theoremstyle{definition}
\newtheorem{rem}[thm]{Remark}           
   
 \newtheorem*{acknowledgement*}{\protect\acknowledgementname}
\newcounter{claim}
\newenvironment{claim}[1][]{\refstepcounter{claim}\par\noindent\underline{Claim~\theclaim:}\space#1}{}
\newenvironment{claimproof}[1]{\par\noindent\underline{Proof:}\space#1}{\qed}

 \providecommand{\acknowledgementname}{Acknowledgement}


\author{Abror Pirnapasov}

\address{Fakult\"at f\"ur Mathematik \\
Ruhr-Universit\"at Bochum}
\email{\texttt{Abror.Pirnapasov@rub.de }}

\title[Hutchings' inequality for the Calabi invariant revisited with an
application to pseudo-rotations.]{Hutchings' inequality for the Calabi invariant revisited with an
application to pseudo-rotations.}
\begin{document}

\begin{abstract}
In \cite{hutchings2016mean}, Hutchings uses embedded contact homology to show the following for area-preserving disc diffeomorphisms that are a rotation near the boundary of the disc: If the asymptotic mean action on the boundary is greater than the Calabi invariant, then the infimum of the mean action of the periodic points is less than or equal to the Calabi invariant.   In this article, we extend this to all area-preserving disc diffeomorphisms. Our strategy is to extend the diffeomorphism to a larger disc with nice properties and apply Hutchings' theorem.  
 As an application, we show that  the Calabi invariant of any smooth pseudo-rotation is equal to its rotation number.   
\end{abstract}

\maketitle
\section{Introduction}

Let $\phi$ be an area-preserving diffeomorphism of the closed 2 dimensional  radius $1$ disc $\mathbb{D}$ with the area form $\omega=r dr\wedge d\theta.$ Let $\beta=\frac{r^{2}}{2}d\theta$ be a primitive of $\omega.$ A  $C^{\infty}$ function $f:\mathbb{D}\rightarrow \mathbb{R} $ is called an action of $\phi$ if $f$ satisfies the following property:
     \begin{align}\label{action}
     \phi^{*}\beta-\beta=df.
     \end{align}
Such    $f:\mathbb{D}\to \mathbb{R}$ exists, because the 1-form $\phi^{*}\beta-\beta$ is closed, i.e., $d(\phi^{*}\beta-\beta)=\phi^{*}d\beta-d\beta=0$ and any closed 1-form on $\mathbb{D}$ is exact. Obviously $f$
is not unique, we address this in a moment.
The quantity $A_{\phi}(x)$ is called the \textit{asymptotic mean action} with respect to $f,$ if the following limit exists
$$A_{\phi}^{\infty}(x):=\lim_{n\rightarrow \infty}\frac{\sum_{i=0}^{n-1}f({\phi}^{i}(x))}{n}.$$
By Birkhoffs' ergodic theorem, $A_{\phi}^{\infty}(x)$ is well-defined for Lebesgue a.e. $x\in\mathbb{D}.$ If we choose $x\in\partial \mathbb{D},$ then  
$A_{\phi}^{\infty}$ is well-defined and is independent of $x.$ 

If $f$ is the action of $\phi$ with respect to the primitive $\beta,$ then for any $c\in \mathbb{R}$, $f+c$ is also and the corresponding asymptotic mean action simply changes by adding $c.$ For this reason we can define for each $a\in\mathbb{R}$ $$f_{(\phi,a)}:\mathbb{D}\to \mathbb{R}$$
to be the unique action function for $\phi,$ for which $A_{\phi}^{\infty}$ coincides with $a$ on $\partial \mathbb{D}.$ We define the corresponding Calabi invariant  by 
$$\mathcal{V}(\phi,a):=\frac{1}{\int_{\mathbb{D}}\omega}\int_{\mathbb{D}}f_{(\phi,a)\omega}.$$
We say $x$ is a periodic point of $\phi$ with period $d,$  if $\phi^{d}(x)=x$ for some $d\in \mathbb{Z}_{+}$ and  $\phi^{i}(x)\neq x$ for every $0\leq i\leq d-1.$ We will use the symbol $P(\phi)$ to denote the set of periodic points of $\phi.$  If  $x$ is a periodic point with period $d$, we define \textit{mean action}
$$A_{(\phi,a)}(x):=\frac{1}{d}\sum_{i=0}^{d-1}f_{(\phi,a)}({\phi}^{i}(x)).$$
\begin{rem} \label{remark}\begin{itemize}
    \item []
    \item We denote the ''usual'' asymptotic mean action and the Calabi invariant for maps $\phi$ with compact support by 
     \[
                           A_{(\phi)}^{\infty}(x):= A_{(\phi,0)}^{\infty}(x)
    \]
    and
    \[
                            \mathcal{V}(\phi):= \mathcal{V}(\phi,0).
    \]
    For the case where $\phi$ coincides with rotation by $2\pi\alpha$ on the boundary of $\mathbb{D},$ Hutchings in  \cite{hutchings2016mean} defines the asymptotic mean action and the Calabi invariant by $A_{(\phi)}^{\infty}(x):=A_{(\phi,\alpha)}^{\infty}(x)$ and $\mathcal{V}(\phi):=\mathcal{V}(\phi,\alpha).$
    \item Clearly $A_{\phi}^{\infty}(x)$ is well-defined at every $x\in P(\phi)$ and $A_{(\phi,a)}^{\infty}(x)=A_{(\phi,a)}(x).$ \item From the definitions of the Calabi invariant and the asymptotic mean action, we have $\mathcal{V}(\phi,a)=\mathcal{V}(\phi,0)+a$ and $A_{(\phi,a)}^{\infty}(x)=A_{(\phi,0)}^{\infty}(x)+a.$
    \item The Calabi invariant and the asymptotic mean action are  independent of the primitive of
$\omega$ and do not change under area-preserving conjugacy.  We refer for instance to \cite{mcduff2017introduction} or  \cite{Senior2020AsymptoticAA}.
\end{itemize}
\end{rem}

In \cite{hutchings2016mean}, Hutchings used  embedded contact homology to prove  the following inequality:
\begin{thm}[\cite{hutchings2016mean}]\ \label{hut}
 Let $\theta_{0}\in \mathbb{R},$ and let $\phi$ be an area-preserving diffeomorphism of $\mathbb{D}$
which agrees with a rotation by angle $2\pi\theta_{0}$ {near} the boundary. Suppose that
\begin{align}\label{1.2}
\mathcal{V}(\phi, \theta_{0}) < \theta_{0}. 
\end{align}
Then
\begin{align}\label{1.3}
\inf \bigg\{  A_{(\phi,\theta_{0})}(x)| x \in P(\phi)\bigg\}\leq \mathcal{V}(\phi, \theta_{0}).
\end{align}
\end{thm}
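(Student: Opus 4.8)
\emph{Proof proposal.} The plan is to recast the statement in terms of the Reeb dynamics of a contact form on a closed $3$-manifold and then apply the volume property of embedded contact homology, following Hutchings.

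First, one translates the disc map into contact geometry. Using the normalized action function $f:=f_{(\phi,\theta_{0})}$, form the mapping torus $Y_{\phi}=([0,1]\times\mathbb{D})/(1,x)\sim(0,\phi(x))$ and equip it with the contact form whose Reeb flow is the suspension flow of $\phi$, reparametrized with the help of $f$; its boundary is a two-torus carrying a linear flow of slope $\theta_{0}$. Cap off this torus by gluing in a solid torus with the framing that closes $Y_{\phi}$ up to $S^{3}$ (a lens space if $\theta_{0}$ is rational; one may, if convenient, first make a $C^{\infty}$-small perturbation of $\phi$ near $\partial\mathbb{D}$ so that $\theta_{0}$ is irrational, since this changes $\mathcal{V}(\phi,\theta_{0})$, the boundary value $\theta_{0}$, $P(\phi)$ and the mean actions by arbitrarily little and one recovers \eqref{1.3} by a limiting argument). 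Extend the contact form $\lambda$ so that: (i) its Reeb orbits are exactly the iterates $\gamma_{0}^{m}$ of the core $\gamma_{0}$ of the glued solid torus — an elliptic orbit whose rotation number is governed by $\theta_{0}$ — together with one Reeb orbit for each periodic point of $\phi$; (ii) the orbit over a point $x$ of period $d$ has symplectic action $d\,A_{(\phi,\theta_{0})}(x)$; (iii) the contact volume $\int_{Y}\lambda\wedge d\lambda$ and the action $\mathcal{A}(\gamma_{0})$ are determined, up to an error concentrated in the glued-in piece that can be made as small as desired, by $\mathcal{V}(\phi,\theta_{0})$ and by $\theta_{0}$ respectively. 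All of this is the content of the open-book/mapping-torus constructions in \cite{hutchings2016mean}.

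Next, invoke the volume property of ECH capacities: for any contact form on a closed $3$-manifold one has $\lim_{k\to\infty}c_{k}^{2}/k=2\,\mathrm{vol}$, and each $c_{k}$ is the symplectic action of an ECH generator whose ECH index is controlled by $k$. Suppose for contradiction that $A_{(\phi,\theta_{0})}(x)>\mathcal{V}(\phi,\theta_{0})$ for every $x\in P(\phi)$. Combined with the hypothesis $\mathcal{V}(\phi,\theta_{0})<\theta_{0}$ and item (iii), after shrinking the glued-in error we get that no Reeb orbit of $\lambda$ has action $\le\mathcal{V}(\phi,\theta_{0})+\varepsilon$, so any ECH generator of small action must be built almost entirely from iterates of the elliptic orbit $\gamma_{0}$ or from orbits over periodic points. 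Iterates $\gamma_{0}^{m}$ contribute ECH index and action both growing linearly in $m$, so producing ECH index of order $k$ from $\gamma_{0}$ alone costs action of order $k$; orbits over periodic points contribute action at least $\mathcal{V}(\phi,\theta_{0})$ per unit winding in the mapping-torus direction, and a computation of the ECH index in the mapping-torus model shows that reaching index of order $2k$ this way costs action at least $\sqrt{2\,\mathrm{vol}\cdot k}\,(1+o(1))$, with room to spare. In either case $c_{k}$ exceeds $\sqrt{2\,\mathrm{vol}\cdot k}\,(1+o(1))$ for all large $k$, contradicting the volume property. Hence some periodic point has mean action $\le\mathcal{V}(\phi,\theta_{0})$, i.e. \eqref{1.3} holds.

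The main obstacle is the first step: producing the contact form on $S^{3}$ from $\phi$ with the precise dictionary between periodic points and Reeb orbits, computing the contact volume and $\mathcal{A}(\gamma_{0})$ exactly, and carrying out the ECH-index bookkeeping in the last step that converts ``no small-action periodic points'' into a lower bound on $c_{k}$ beating the Weyl asymptotics. The analytic heart — the volume property of ECH — is used as a black box; the remaining work is the translation between area-preserving surface dynamics and contact dynamics and the combinatorics of the ECH index.
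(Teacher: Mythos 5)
This theorem is not proved in the paper you are being compared against: it is quoted verbatim from \cite{hutchings2016mean} and used as a black box in the proof of the main theorem, so there is no in-paper argument to measure your proposal against. What you have written is a plausible outline of Hutchings' original ECH proof --- mapping torus of $\phi$, capping off to a closed contact $3$-manifold with an elliptic binding orbit, the dictionary between periodic points and Reeb orbits with action $d\,A_{(\phi,\theta_0)}(x)$, contact volume governed by $\mathcal{V}(\phi,\theta_0)$, and the volume/Weyl-law property of ECH spectral invariants --- and in those broad strokes it matches the strategy of the cited reference.

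As a proof, however, it is not complete, and the gaps are exactly where the content lies. The step that converts ``every periodic point has mean action $>\mathcal{V}(\phi,\theta_0)$'' into a lower bound on $c_k$ that contradicts the volume asymptotics is asserted (``with room to spare'') but not carried out; this ECH-index versus action bookkeeping, together with the role of the hypothesis \eqref{1.2} in controlling how much index the iterates of the elliptic binding orbit can supply per unit of action, is the entire argument of \cite{hutchings2016mean}, not a routine verification. Two further points need justification rather than assertion: (i) the limiting argument that reduces to irrational $\theta_0$ must explain why the infimum in \eqref{1.3} behaves semicontinuously under a $C^\infty$-small perturbation near the boundary (new periodic points can appear, but old ones can also be destroyed, and the strict inequality \eqref{1.2} must be preserved); and (ii) the claim that the contact volume and $\mathcal{A}(\gamma_0)$ are determined by $\mathcal{V}(\phi,\theta_0)$ and $\theta_0$ up to an arbitrarily small error requires the explicit computation in the glued solid torus. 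If you intend to use this theorem the way the paper does, citing \cite{hutchings2016mean} is the right move; if you intend to reprove it, the index combinatorics must be written out.
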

One of the conditions of Theorem \ref{hut} is that $\phi$ is a rigid rotation \textbf{near} the boundary i.e. on a neighborhood. In his blog post \cite{Hutpage}, Hutchings raises the question whether Theorem \ref{hut} holds when $\phi$ is a rigid rotation only \textbf{on} the boundary. The following result confirms this and moreover we show that inequality (\ref{1.3}) holds without any assumptions on the boundary except the inequality (\ref{1.2}). In this level of generality it is not immediately obvious what should play the role of $\theta_{0}$ in (\ref{1.2}) for a general area preserving disc map $\phi.$ Apriori this could involve the pointwise action or the mean action on the boundary. Our result shows it is enough to consider the mean action.

\begin{mainthm}\label{sub}
Suppose that $\phi$ is an area-preserving diffeomorphism of $\mathbb{D}$ satisfying 
\begin{align}\label{1.4}
\mathcal{V}(\phi, 0) < 0. 
\end{align}
Then
\begin{align}\label{1.5}
\inf\bigg\{ A_{(\phi,0)}(x)| x \in P(\phi)\bigg\}\leq \mathcal{V}(\phi, 0).
\end{align}
\end{mainthm}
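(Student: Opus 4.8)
The plan is to reduce the statement to Theorem~\ref{hut} by extending $\phi$ to a slightly larger disc on which it becomes a rigid rotation near the boundary. Fix a real representative $\theta_0$ of the rotation angle of $\phi$ on $\partial\mathbb{D}$. For each small $\epsilon>0$ I would construct an area-preserving diffeomorphism $\Phi_\epsilon$ of the disc $\mathbb{D}_{1+\epsilon}$ of radius $1+\epsilon$ such that: (a) $\Phi_\epsilon|_{\mathbb{D}}=\phi$; (b) $\Phi_\epsilon$ equals the rigid rotation by $2\pi\theta_0$ on a neighbourhood of $\partial\mathbb{D}_{1+\epsilon}$; and (c) the action function $g_\epsilon:=f_{(\Phi_\epsilon,0)}$, normalised to vanish on $\partial\mathbb{D}_{1+\epsilon}$, satisfies $\sup_{\mathbb{D}_{1+\epsilon}\setminus\operatorname{int}\mathbb{D}}|g_\epsilon|\to 0$ as $\epsilon\to 0$; in particular $b_\epsilon:=g_\epsilon|_{\partial\mathbb{D}}\to 0$. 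Note that Theorem~\ref{hut} holds on a disc of any radius: conjugating by the radial dilation $\mathbb{D}_{R}\to\mathbb{D}$ (which commutes with rotations and rescales $\omega$ by $R^{-2}$) multiplies $\mathcal{V}(\,\cdot\,,a)-a$ and $A_{(\,\cdot\,,a)}(x)-a$ by $R^{-2}$, so both \eqref{1.2} and \eqref{1.3} are preserved; and for a map which is a rotation near the boundary the hypothesis \eqref{1.2} reads $\mathcal{V}(\Phi_\epsilon,0)<0$ in the $a=0$ normalisation used in \eqref{1.4}.

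Granting the construction, the argument finishes by bookkeeping and a limit. Splitting the integral defining the Calabi invariant over $\mathbb{D}$ and over the collar $\mathcal{C}_\epsilon:=\mathbb{D}_{1+\epsilon}\setminus\operatorname{int}\mathbb{D}$, and using that $g_\epsilon|_{\mathbb{D}}$ is the action function of $\phi$ with boundary value $b_\epsilon$, one gets
\[
\mathcal{V}(\Phi_\epsilon,0)=\frac{\mathcal{V}(\phi,0)+b_\epsilon}{(1+\epsilon)^2}+\Bigl(1-\frac{1}{(1+\epsilon)^2}\Bigr)\overline{g_\epsilon},
\]
where $\overline{g_\epsilon}$ is the mean of $g_\epsilon$ over $\mathcal{C}_\epsilon$. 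By (c), $\mathcal{V}(\Phi_\epsilon,0)\to\mathcal{V}(\phi,0)<0$, so for all small $\epsilon$ Theorem~\ref{hut} applies and gives $\inf_{x\in P(\Phi_\epsilon)}A_{(\Phi_\epsilon,0)}(x)\le\mathcal{V}(\Phi_\epsilon,0)$. Since $\mathbb{D}$ and $\mathbb{D}_{1+\epsilon}\setminus\mathbb{D}$ are $\Phi_\epsilon$-invariant, any periodic orbit meeting $\mathbb{D}_{1+\epsilon}\setminus\mathbb{D}$ lies entirely in it, hence in $\mathcal{C}_\epsilon$, so its mean action is at least $\min_{\mathcal{C}_\epsilon}g_\epsilon$, which by (c) exceeds $\mathcal{V}(\Phi_\epsilon,0)$ once $\epsilon$ is small; thus such orbits do not realise the infimum, so $\inf_{x\in P(\Phi_\epsilon)}A_{(\Phi_\epsilon,0)}(x)=\inf_{x\in P(\phi)}A_{(\phi,0)}(x)+b_\epsilon$ — the infimum is realised by periodic points of $\phi$, whose $\Phi_\epsilon$- and $\phi$-orbits coincide and satisfy $A_{(\Phi_\epsilon,0)}(x)=A_{(\phi,0)}(x)+b_\epsilon$ by Remark~\ref{remark}. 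Combining with the displayed identity,
\[
\inf_{x\in P(\phi)}A_{(\phi,0)}(x)\le\frac{\mathcal{V}(\phi,0)}{(1+\epsilon)^2}+\Bigl(1-\frac{1}{(1+\epsilon)^2}\Bigr)\bigl(\overline{g_\epsilon}-b_\epsilon\bigr),
\]
and letting $\epsilon\to 0$ yields \eqref{1.5}.

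The heart of the argument, and where the hypothesis $\mathcal{V}(\phi,0)<0$ enters, is thus the construction of $\Phi_\epsilon$. An area-preserving extension of $\phi$ across $\partial\mathbb{D}$ exists by standard means (write $\phi$ as the time-one flow of a time-dependent Hamiltonian on $\mathbb{D}$ whose restriction to $\partial\mathbb{D}$ is, for each time, locally constant, and extend the Hamiltonian to $\mathbb{R}^2$; or extend $\phi$ first as a smooth map and then correct the area by Moser's argument on a collar). One then modifies this extension on the outer part of the collar so that it becomes the rigid rotation by $2\pi\theta_0$ there, keeping the $\infty$-jet along $\partial\mathbb{D}$ fixed, again by Moser's deformation argument. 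The point to verify is (c): on the thin collar the $1$-form $\Phi_\epsilon^{*}\beta-\beta=dg_\epsilon$ stays bounded in the radial direction and is $O(\operatorname{dist}(\cdot,\partial\mathbb{D}))=O(\epsilon)$ in the angular direction — the latter because $g_\epsilon$ is constant on each boundary circle of the collar and the angular component of $\Phi_\epsilon^{*}\beta-\beta$ vanishes along $\partial\mathbb{D}$ — so that $g_\epsilon$ moves by only $O(\epsilon)$ across the collar. I expect this control (in particular, that the passage to a rigid rotation can be carried out without creating an $\epsilon$-independent oscillation of $g_\epsilon$, even though $\phi$ need not be rotation-like near $\partial\mathbb{D}$) to be the main technical step; the rest is the bookkeeping above, and the slack between $\mathcal{V}(\phi,0)$ and $0$ is precisely what makes property (c) enough to push the collar's periodic orbits above the Calabi threshold.
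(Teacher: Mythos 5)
Your overall strategy is the same as the paper's: extend $\phi$ to a larger disc so that it becomes a rigid rotation near the new boundary, arrange that the action function is uniformly small on the added collar, apply Theorem~\ref{hut} to the extension, observe that collar periodic orbits cannot realise the infimum, and pass to the limit. Your bookkeeping (the rescaling remark, the splitting of the Calabi integral, the identity $A_{(\Phi_\epsilon,0)}(x)=A_{(\phi,0)}(x)+b_\epsilon$ for $x\in P(\phi)$, and the final limit) is correct and matches the paper's inequalities. The gap is exactly where you flag it: the construction of $\Phi_\epsilon$ with property (c) is the entire content of the theorem, and the argument you offer for (c) does not go through as stated. The bounds you invoke on $dg_\epsilon=\Phi_\epsilon^{*}\beta-\beta$ are not consequences of ``an extension followed by a Moser correction''; they are constraints the interpolation must be engineered to satisfy. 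Concretely, the radial component of $\Phi_\epsilon^{*}\beta-\beta$ is $\tfrac{R^2}{2}\partial_r\Theta$ and the angular component is $\tfrac{R^2\partial_\theta\Theta-r^2}{2}$; a generic area-preserving interpolation between the boundary jet of $\phi$ (which is a rotation only \emph{on} $\partial\mathbb{D}$, so its jet need not be rotation-like) and a rigid rotation, carried out across a collar of width $\epsilon$, has radial derivatives of order $1/\epsilon$ and no a priori control on $\partial_\theta\Theta-1$ at intermediate radii. Your ``$O(\mathrm{dist}(\cdot,\partial\mathbb{D}))$ in the angular direction'' estimate implicitly assumes a uniform $C^2$-type bound on the family $\Phi_\epsilon$, which is precisely what can fail and what must be built in.

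The paper closes this gap with generating functions. Since $D_1R(1,\theta)=1$, the map $\Upsilon$ induced by $\phi$ admits a generating function $W$ near $\{1\}\times\mathbb{R}$ (Remark~\ref{nbhd}); the computations (\ref{gen1})--(\ref{onboun}) show that $W$ glues in a $C^1$ fashion with the generating function $-\theta_0\pi R^2$ of the rigid rotation on the outer annulus. Whitney approximation (Lemma~\ref{lem}) then produces smooth $W_n$ agreeing with $W$ for $r\le 1$, with the rotation's generating function for $r\ge 1+\tfrac{1}{n+1}$, and $C^1$-close to the glued function everywhere. The point this buys, and which your sketch lacks, is that $C^1$-closeness of generating functions automatically yields area-preserving interpolating maps that are $C^0$-close to the rotation on the whole annulus (via (\ref{2.1}), (\ref{2.2}) and Lemma~\ref{lem1}), and hence a uniform bound $|\hat\Sigma_n|\le \tfrac{2}{k}$ on the collar action — your property (c). If you want to complete your argument along the lines you propose, you would need to impose and verify that your interpolation satisfies $\partial_r R>0$ together with uniform bounds on $\Theta-\theta$ and $\partial_\theta\Theta-1$ on the collar; at that point you have essentially rediscovered the monotone-twist/generating-function structure the paper uses.
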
 

By Remark \ref{remark}, $$\mathcal{V}(\phi, \theta_{0})=\mathcal{V}(\phi, 0)+\theta_{0}$$ and  $$\inf \bigg\{  A_{(\phi,\theta_{0})}(x)| x \in P(\phi)\bigg\}= \inf \bigg\{  A_{(\phi,0)}(x)| x \in P(\phi)\bigg\}+\theta_{0},$$
so that (\ref{1.4}) and (\ref{1.5}) are nothing else than reformulations of (\ref{1.2}) and (\ref{1.3}) respectively. We found this reformulation is helpful because the mean action $A_{(\phi,0)}(x)$ and the Calabi invariant $\mathcal{V}(\phi, 0)$  are homogeneous. That is
\begin{align}\label{homoasym}
    A_{(\phi^{n},0)}(x)=nA_{(\phi,0)}(x)
\end{align}
and 
\begin{align}\label{homocal}
    \mathcal{V}(\phi^{n},0)=n\mathcal{V}(\phi, 0)
\end{align}
for every $n\in \mathbb{N}.$

The following theorem implies Theorem \ref{sub}.
\begin{thm}\label{ps3}
Suppose that $\phi$ is an area-preserving diffeomorphism of $\mathbb{D}$ satisfying
\begin{align}\label{1.6}
\mathcal{V}(\phi, 0) < -1000\pi. 
\end{align}
Then
\begin{align}\label{1.7}
\inf\bigg\{ A_{(\phi,0)}(x)| x \in P(\phi)\bigg\}\leq \mathcal{V}(\phi, 0).
\end{align} 
\end{thm}
Now we prove Theorem \ref{sub} using Theorem \ref{ps3}.
\begin{proof}[Proof of Theorem \ref{sub}] 
By assumptionon $\mathcal{V}(\phi,0)<0$, there exist natural number $n_{0}$ such that $$\mathcal{V}(\phi^{n_{0}},0)=n_{0}\mathcal{V}(\phi,0)< -1000\pi.$$  
By applying Theorem \ref{ps3} to the diffeomorphism $\phi^{n_{0}}$, we get 
$$\inf\bigg\{ A_{(\phi^{n_{0}},0)}(x)| x \in P(\phi)\bigg\}\leq \mathcal{V}(\phi^{n_{0}}, 0).$$
By (\ref{homoasym}) and (\ref{homocal}), we have 
$$n_{0}\inf\bigg\{ A_{(\phi,0)}(x)| x \in P(\phi)\bigg\}=\inf\bigg\{ A_{(\phi^{n_{0}},0)}(x)| x \in P(\phi)\bigg\}\leq \mathcal{V}(\phi^{n_{0}}, 0)=n_{0}\mathcal{V}(\phi, 0).$$\end{proof}

\subsection*{Applications to Pseudo-rotations}
In this section we apply our results to compute the Calabi invariant of pseudo-rotations.   In this article by a pseudo-rotation we always mean a $C^\infty$-smooth and area-preserving diffeomorphism of the closed disc $\mathbb{D}$ having a unique periodic point.  As is well known by results of Franks \cite{franks1988generalizations} the unique periodic point is an interior fixed point and to each pseudo-rotation $\phi$ there is a unique irrational number we denote by $\rho(\phi)\in[0,1)$ called the rotation number of $\phi$ which 
is characterised by the following dynamical property:  Every trajectory besides the unique fixed point has 
a well defined asymptotic mean winding number about the fixed point, and the value of this winding number is $\rho(\phi)$.  In particular $\rho(\phi)$ coincides with the rotation number of $\phi$ on the boundary of the disc.  A large class of pseudo-rotations that are not conjugate to a rotation were discovered by Anosov and Katok in their seminal paper \cite{katok1972new}.     

Applying the inequality (\ref{1.5}) above to pseudo-rotations yields the following identity:  

\begin{thm}[Calabi identity for pseudo-rotations]\label{ps2}
If $\phi:\mathbb{D}\to\mathbb{D}$ is a pseudo-rotation then its  Calabi invariant coincides with its rotation number.  In our notation 
\begin{equation}\label{CalabiID}
    \mathcal{V}{(\phi)}=\rho(\phi)
\end{equation}
or equivalently $\mathcal{V}{(\phi, 0)}=0$.  \end{thm}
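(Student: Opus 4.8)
The plan is to reduce to the situation of Theorem \ref{sub}, to use that a pseudo-rotation has exactly one periodic point so that the infimum in (\ref{1.5}) is attained at a single fixed point, and then to play the resulting inequality for $\phi$ off against the same inequality for $\phi^{-1}$.

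\emph{Reduction.} The set of periodic points, the mean actions, and the Calabi invariant $\mathcal{V}(\cdot,0)$ are all unchanged under area-preserving conjugacy (Remark \ref{remark}), as is the rotation number; moreover ``being a pseudo-rotation'' is a conjugacy-invariant property. Conjugating by an area-preserving diffeomorphism that extends a circle diffeomorphism conjugating $\phi|_{\partial\mathbb{D}}$ to a rigid rotation, I may therefore assume from the start that $\phi$ restricts to the rotation $R_{\rho}$ by $2\pi\rho$ on $\partial\mathbb{D}$, where $\rho=\rho(\phi)$. The unique periodic point $p$ is then an interior fixed point (Franks), so $P(\phi)=\{p\}$ and hence
\[
\inf\{A_{(\phi,0)}(x)\mid x\in P(\phi)\}\;=\;A_{(\phi,0)}(p)\;=\;f_{(\phi,0)}(p).
\]
The same applies to $\phi^{-1}$: it is again a pseudo-rotation, with the same fixed point $p$ and the same periodic points (and periods), it is a rotation on $\partial\mathbb{D}$, and since $R_{\rho}$ preserves $\beta$ one computes $f_{(\phi^{-1},0)}=-f_{(\phi,0)}\circ\phi^{-1}$ (the additive constant being fixed by the normalization on $\partial\mathbb{D}$), whence $f_{(\phi^{-1},0)}(p)=-f_{(\phi,0)}(p)$ and, by a change of variables, $\mathcal{V}(\phi^{-1},0)=-\mathcal{V}(\phi,0)$.

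\emph{The key input.} I claim $f_{(\phi,0)}(p)=0$, i.e.\ the asymptotic mean action of the unique fixed point equals the (constant) asymptotic mean action on the boundary. When $\phi$ is globally smoothly conjugate to a rotation, $\phi=\psi\circ R_{\rho}\circ\psi^{-1}$, this is immediate: writing $\psi^{*}\beta-\beta=d\mu_{\psi}$ and $g=\mu_{\psi}\circ\psi^{-1}$, one checks that $f_{(\phi,0)}=g\circ\phi-g$, so the Birkhoff sums $\tfrac1n\sum_{i=0}^{n-1}f_{(\phi,0)}(\phi^{i}(x))$ telescope to $\tfrac1n\bigl(g(\phi^{n}(x))-g(x)\bigr)\to 0$, and in particular $f_{(\phi,0)}(p)=A_{(\phi,0)}^{\infty}(p)=0$. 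For a pseudo-rotation that is only conjugate to a rotation on the boundary I would obtain the same conclusion either by approximating $\phi$ in $C^{\infty}$ by diffeomorphisms globally conjugate to rotations and passing to the limit (using that $f_{(\cdot,0)}(\cdot)$ and the fixed point depend continuously on the map), or by identifying $f_{(\phi,0)}(p)$ directly with the asymptotic mean winding number of the dynamics about $p$, which by Franks equals $\rho$ for every non-fixed orbit and hence $0$ in the present normalization. This is the step I expect to be the real obstacle: for an arbitrary area-preserving map with an interior fixed point the number $f_{(\phi,0)}(p)$ can be anything, so the pseudo-rotation hypothesis together with the boundary condition must be used in an essential way to pin it to $0$.

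\emph{Conclusion.} Suppose $\mathcal{V}(\phi,0)<0$. Theorem \ref{sub} applied to $\phi$ then gives
\[
0\;=\;f_{(\phi,0)}(p)\;=\;\inf\{A_{(\phi,0)}(x)\mid x\in P(\phi)\}\;\le\;\mathcal{V}(\phi,0)\;<\;0,
\]
a contradiction; hence $\mathcal{V}(\phi,0)\ge 0$. Since $\phi^{-1}$ is again a pseudo-rotation conjugate to a rotation on the boundary with $f_{(\phi^{-1},0)}(p)=-f_{(\phi,0)}(p)=0$, the same argument yields $\mathcal{V}(\phi^{-1},0)\ge 0$, i.e.\ $\mathcal{V}(\phi,0)=-\mathcal{V}(\phi^{-1},0)\le 0$. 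Therefore $\mathcal{V}(\phi,0)=0$, which by Remark \ref{remark} is exactly the identity $\mathcal{V}(\phi)=\rho(\phi)$.
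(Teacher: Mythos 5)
Your proposal follows essentially the same route as the paper: reduce to a rigid rotation on the boundary, note that the unique periodic point is an interior fixed point so the infimum in Theorem \ref{sub} is attained there, derive a contradiction when $\mathcal{V}(\phi,0)<0$, and handle $\mathcal{V}(\phi,0)>0$ by passing to $\phi^{-1}$. The one place where you diverge is the key input $f_{(\phi,0)}(p)=0$, which you correctly flag as the real obstacle. The paper does not prove this either; it imports it as Proposition \ref{bb}, citing \cite{bramham2015periodic} (with the remark that it also follows from a generalised version of Theorem 1.1 in \cite{Senior2020AsymptoticAA}). Your telescoping argument settles it only for maps globally conjugate to a rotation, and neither of your two proposed extensions closes the general case: an arbitrary pseudo-rotation need not be a $C^{\infty}$ limit of maps globally conjugate to rotations (only the Anosov--Katok examples come with such approximations by construction), and the identification of the fixed-point action with the asymptotic mean winding number is itself a nontrivial action--linking statement, i.e.\ precisely the content of Proposition \ref{bb} rather than something that can be asserted. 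So your argument is correct modulo that proposition, exactly as the paper's is; to be complete you should cite it rather than attempt to rederive it.
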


In other words, the Calabi invariant of a pseudo-rotation coincides with the Calabi invariant of the rigid rotation with the same rotation number.  

In the proof we will also need the following: 

\begin{prop}\label{bb}\cite{bramham2015periodic}
If $\phi:\mathbb{D}\to\mathbb{D}$ is a pseudo-rotation, then the action of its unique fixed point $x_0$ is $\rho(\phi)$.   In other words $$A_{(\phi)}(x_{0})=A_{(\phi,\rho(\phi))}(x_{0})=\rho(\phi).$$\end{prop}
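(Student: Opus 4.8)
The statement is essentially due to Bramham \cite{bramham2015periodic}, and the plan is to reduce it to a single sublinear-growth estimate and then feed that estimate the rigidity of pseudo-rotations. First I would carry out the normalization. By Remark \ref{remark} we have $A_{(\phi,\rho(\phi))}(x_0)=A_{(\phi,0)}(x_0)+\rho(\phi)$, so it suffices to prove $A_{(\phi,0)}(x_0)=0$. Since $x_0$ is a fixed point (period $d=1$), the definition of mean action gives $A_{(\phi,0)}(x_0)=f_{(\phi,0)}(x_0)$. On the other hand $\phi|_{\partial\mathbb{D}}$ is a smooth circle diffeomorphism with no periodic points (the unique periodic point of $\phi$ lies in the interior), hence with irrational rotation number $\rho(\phi)$; by Denjoy's theorem it is uniquely ergodic, which is exactly why the boundary asymptotic mean action is well defined and independent of the boundary point, and with the normalization $a=0$ this value is $0$. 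Thus the proposition is equivalent to the assertion that the action of the fixed point equals the asymptotic mean action on the boundary.

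Next I would isolate the analytic core. After conjugating by an area-preserving diffeomorphism of $\mathbb{D}$ — which preserves the boundary, the rotation number, and, by Remark \ref{remark}, both quantities in question — I may assume $x_0$ is the center of $\mathbb{D}$ and work with the standard primitive $\beta=\tfrac{r^2}{2}\,d\theta$. Writing $f=f_{(\phi,0)}$, one checks that $f^{(n)}:=\sum_{i=0}^{n-1}f\circ\phi^i$ is an action for $\phi^n$, i.e. $(\phi^n)^*\beta-\beta=df^{(n)}$. Fixing a path $\gamma$ from $x_0$ to a boundary point $p$ and using $f^{(n)}(x_0)=n\,f(x_0)$ together with $\int_{\phi^n\circ\gamma}\beta=\int_\gamma(\phi^n)^*\beta$, I obtain
\[ \frac1n\sum_{i=0}^{n-1}f(\phi^i(p))=f(x_0)+\frac1n\int_{\phi^n\circ\gamma}\beta-\frac1n\int_\gamma\beta. \]
Letting $n\to\infty$, the left side tends to $A^\infty_{(\phi,0)}(p)=0$ by the first step, so the whole proposition collapses to the single claim
\[ \lim_{n\to\infty}\frac1n\int_{\phi^n\circ\gamma}\beta=0. \]

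To prove this limit I would invoke the periodic approximation of Bramham: a sequence of area-preserving diffeomorphisms $\phi_k\to\phi$ with $\phi_k^{q_k}=\mathrm{id}$ and $p_k/q_k\to\rho(\phi)$. By a classical theorem (Kerékjártó) each finite-order $\phi_k$ is conjugate to the rigid rotation $R_{p_k/q_k}$; since for a rigid rotation the action function is constant and every orbit has mean action equal to its rotation number, conjugation-invariance (Remark \ref{remark}) gives $A_{(\phi_k,0)}(x_{0,k})=0$, so the desired identity holds exactly for every approximant.

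The main obstacle is the passage to the limit $k\to\infty$. The estimate $\tfrac1n\int_{\phi^n\circ\gamma}\beta\to0$ genuinely fails for a general area-preserving map: the iterated arc $\phi^n\circ\gamma$ can wind around $x_0$ at a linear rate, making the twisted integral only $O(n)$ a priori, and the mean action really does vary from orbit to orbit in general. Moreover Bramham's convergence is only $C^0$, whereas $f$ depends on a first derivative of $\phi$, so continuity of the fixed-point action cannot be invoked naively. This is precisely the point at which the rigidity of pseudo-rotations — the pseudoholomorphic-curve (finite-energy-foliation) input of \cite{bramham2015periodic}, which forces the sublinear growth of the integrals $\int_{\phi^n\circ\gamma}\beta$ — is required, and it is exactly this conclusion that I would quote from that paper to close the argument.
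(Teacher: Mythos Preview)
The paper does not give its own proof of this proposition: it is simply quoted from \cite{bramham2015periodic}, with the remark that an alternative proof can be obtained from a generalised version of Theorem~1.1 in \cite{Senior2020AsymptoticAA}. So there is nothing in the paper to compare your argument against beyond the bare citation.

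About your sketch itself: the reduction is clean and correct. Normalising to $a=0$, placing $x_0$ at the centre, and integrating $(\phi^n)^*\beta-\beta=df^{(n)}$ along a radial arc $\gamma$ does yield
\[
\frac{1}{n}\sum_{i=0}^{n-1}f(\phi^i(p))\;=\;f(x_0)+\frac{1}{n}\int_{\phi^n\circ\gamma}\beta-\frac{1}{n}\int_\gamma\beta,
\]
and unique ergodicity on the boundary (Denjoy) gives the left side limit $0$. So the proposition is indeed equivalent to the sublinear growth $\frac{1}{n}\int_{\phi^n\circ\gamma}\beta\to 0$.

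The gap is in how you close this. You invoke Bramham's $C^0$ periodic approximation, observe correctly that $C^0$ is too weak to control actions, and then say you would ``quote from that paper'' the sublinear growth of $\int_{\phi^n\circ\gamma}\beta$. But that sublinear estimate is not an intermediate lemma in \cite{bramham2015periodic} that one can isolate and cite; it is essentially equivalent to the proposition you are trying to prove, and Bramham obtains the action identity directly from the finite-energy-foliation machinery (the fixed point appears as a binding orbit whose contact action is tied to the rotation number), not via the periodic-approximation detour you describe. So steps (4)--(5) collapse to ``cite \cite{bramham2015periodic} for the result'', which is exactly what the paper already does. If you want a self-contained argument, the more elementary route the paper alludes to --- through the asymptotic action/linking framework of \cite{Senior2020AsymptoticAA} --- is the one to develop.
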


It is also possible to prove Proposition \ref{bb} with more elementary methods using a generalised version of Theorem 1.1 in \cite{Senior2020AsymptoticAA}.

\begin{proof}[Proof of Theorem \ref{ps2}]
The proof goes by contradiction. If the claim is false then either  $\mathcal{V}{(\phi, 0)}<0$ or $\mathcal{V}{(\phi, 0)}>0.$  

Note that the conclusion of Proposition \ref{bb} is equivalent to $A_{(\phi,0)}(x_{0})=0$ in our notation.   
If  $\mathcal{V}{(\phi, 0)}<0$, then by Theorem \ref{sub}, we have 
$$\inf\bigg\{ A_{(\phi,0)}(x)| x \in P(\phi)\bigg\}\leq \mathcal{V}(\phi, 0)<0=A_{(\phi,0)}(x_{0}).$$ Which implies the existence of another periodic point, and that is 
 contradiction to the uniqueness of periodic points.
 
If $\mathcal{V}{(\phi, 0)}>0:$ Then $\mathcal{V}{(\phi^{-1}, 0)}<0$ and we apply Thereom \ref{sub} to the map $\phi^{-1}.$\end{proof}

\begin{rem}
In \cite {senior2020relation}, the authors proved a version of Theorem \ref{sub} for Anosov Reeb flows using the Action-Linking Lemma. In \cite{cristofarogardiner2021contact}, authors recently show the same results as Theorem \ref{ps2} for all Reeb flow with only two periodic orbits on closed three-manifolds. Independently in \cite{beno}, the author proved Theorem \ref{ps2} by another method for pseudo-rotations with super-Liouvillean rotation number.
\end{rem}

\begin{rem}
On the proof of Theorem 1, we attempted to extend the diffeomorphism $\phi$ using a Hamiltonian, but we encountered the difficulty of bounding the mean action on the extended region.
\end{rem}

\subsection{Acknowledgement}

This work is part of the author’s Ph.D. thesis, written under the supervision of Barney Bramham and Gerhard Knieper at the Ruhr University of Bochum. The author is grateful  to Alberto Abbondandolo for his
suggestion of using generating functions and to  David Bechara Senior for the many helpful suggestions. The author was supported by the DFG SFB/TRR 191 ‘Symplectic Structures in Geometry, Algebra
and Dynamics’, Projektnummer 281071066-TRR 191.

\section{Generating functions for area-preserving maps of the strip.}
We first extend $\phi$ continuously to a larger disc, then approximate it by smooth maps in such a way that we can apply Hutchings' theorem. For the approximations, we use generating functions. Throughout, smooth mean $C^{\infty}$-smooth, unless otherwise stated.

We first state detailed facts about the generating functions given in \cite{abbondandolo2018sharp}.

Let $T:[a,b]\times \mathbb{R}\to [a,b]\times \mathbb{R}$
denote the map $T(r,\theta)=(r,\theta+2\pi)$ and set $\Omega=rdr\wedge d\theta$ where $(r,\theta)\in [a,b]\times \mathbb{R}\subset (0,+\infty)\times \mathbb{R}.$

Assume that $\Phi(r,\theta)=(R(r,\theta),\Theta(r,\theta))\colon [a,b]\times\mathbb{R}\to [a,b]\times\mathbb{R} $ is a  $C^{1}$ map with the following properties:
\begin{enumerate}
    \item $\Phi\circ T=T\circ \Phi,$
    \item $\Phi$ maps each connected component of $\partial( [a,b]\times\mathbb{R})$ into itself,
    \item $\Phi^{*}\Omega=\Omega,$
    \item $D_{1}R(r,\theta)>0.$
\end{enumerate}
Property (4) implies that $r\mapsto R(r,\theta)$ is an orientation-preserving diffeomorphism of $[a,b]$ onto itself, for every $\theta.$ This defines a smooth diffeomorphism
$$\Psi:[a,b]\times\mathbb{R}\to[a,b]\times\mathbb{R} \ \ \mbox{with} \ \ \Psi(r,\theta)=(R(r,\theta),\theta).$$
From the fact above, one can work with $(R,\theta)$ coordinates on $[a,b]\times \mathbb{R}.$ By property (2),
\begin{equation}\label{eq}
    R(b,\theta)=b
\end{equation}
for all $\theta\in\mathbb{R}.$

Let us consider the 1-form:
\begin{align}\label{for}
  \lambda(R,\theta)=\frac{R^2-r^2(R,\theta)}{2}d\theta+R(\theta-\Theta)dR.  
\end{align}
One can show easily that $T^{*}\lambda=\lambda$ with (\ref{for}).

By property (3) of $\Phi,$ we compute the differential of $\lambda$ : 
$$d\lambda=RdR\wedge d\theta-rdr\wedge d\theta-RdR\wedge d\theta+R dR\wedge d\Theta=$$
$$\Phi^{*}\Omega-\Omega=0.$$
The last equality shows that the 1-form $\lambda$ is closed and therefore exact on $[a,b]\times \mathbb{R}.$  Hence, we can find $W$ such that $dW=\lambda.$ From the equality $T^{*}\lambda=\lambda,$ it follows that $W\circ T$ is also solution of $dW=\lambda,$ hence $W\circ T=W+c$ for some constant $c\in\mathbb{R}.$ By (\ref{eq}),
$$\lambda_{(a,\theta)}[\partial_{\theta}]=0$$ for all $\theta\in\mathbb{R}$ and so $W$ is constant on the boundary of $[a,b]\times \mathbb{R}.$ Therefore $c=0,$ i.e., $W\circ T=W.$
We will call $W$  a \textit{generating function} for $\Psi$ with respect to the area form $\Omega.$ It is defined up to the addition of a constant and from the components of $dW=\lambda$ we see that $W$ satisfies:
\begin{align}\label{2.1}
    D_{1}W(R,\theta)=R(\theta-\Theta),
    \end{align}
    and
    \begin{align}\label{2.2}
    D_{2}W(R,\theta)=\frac{R^2-r^2}{2}.
\end{align}
\begin{rem}\label{nbhd}
We emphasize that if the property (4) is not true for the diffeomorphism $\Phi$ on the whole of $[a,b]\times \mathbb{R}$ but only true near $\{b\}\times \mathbb{R},$ then one can also define the generating function $W$ near to $\{b\}\times \mathbb{R}.$
\end{rem}
In section 2.6 of \cite{abbondandolo2018sharp}, the authors computed the action of a diffeomorphism with respect to the generating function of this diffeomorphism.
\begin{thm}\label{ac}The function $\Sigma:[a,b]\times\mathbb{R}\to \mathbb{R}$ given by  \begin{equation}\label{con}
   \Sigma:=W(R,\theta)+\frac{R^{2}\Theta-R^{2}\theta}{2}
   \end{equation}
   is an action of $\Phi(r,\theta)$ with respect to the one form  $\beta=\frac{r^{2}}{2}d\theta.$  That is, $\Sigma$ satisfies the equation (\ref{action}).
Moreover, $$\Sigma\circ T=\Sigma.$$\end{thm}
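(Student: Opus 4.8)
The plan is a direct computation in the $(R,\theta)$ coordinates furnished by $\Psi$, using only the two defining relations (\ref{2.1}) and (\ref{2.2}) for the generating function $W$. First I would record the pullback: since $\Phi(r,\theta)=(R,\Theta)$, one has $\Phi^{*}\beta=\tfrac12 R^{2}\,d\Theta$, so that
\[
\Phi^{*}\beta-\beta=\frac{R^{2}}{2}\,d\Theta-\frac{r^{2}}{2}\,d\theta .
\]
The goal is thus to show that $d\Sigma$ equals the right-hand side.

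Next I would differentiate $\Sigma=W(R,\theta)+\tfrac12 R^{2}\Theta-\tfrac12 R^{2}\theta$, treating $(R,\theta)$ as the independent coordinates — which is legitimate precisely because property (4) makes $\Psi$ a diffeomorphism and lets us work in the $(R,\theta)$ chart. From (\ref{2.1}) and (\ref{2.2}) we have $dW=R(\theta-\Theta)\,dR+\tfrac12(R^{2}-r^{2})\,d\theta$, while $\tfrac12 d(R^{2}\Theta)=R\Theta\,dR+\tfrac12 R^{2}\,d\Theta$ and $\tfrac12 d(R^{2}\theta)=R\theta\,dR+\tfrac12 R^{2}\,d\theta$. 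Adding these, the coefficient of $dR$ is $R(\theta-\Theta)+R\Theta-R\theta=0$, so the $dR$-term disappears; the coefficient of $d\theta$ collapses to $\tfrac12(R^{2}-r^{2})-\tfrac12 R^{2}=-\tfrac12 r^{2}$; and $d\Theta$ survives with coefficient $\tfrac12 R^{2}$. Hence $d\Sigma=\tfrac12 R^{2}\,d\Theta-\tfrac12 r^{2}\,d\theta=\Phi^{*}\beta-\beta$, which is exactly (\ref{action}).

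For the invariance $\Sigma\circ T=\Sigma$, I would use that property (1), $\Phi\circ T=T\circ\Phi$, is equivalent to $R\circ T=R$ and $\Theta\circ T=\Theta+2\pi$, and that $\Psi$ commutes with $T$, so the identity $W\circ T=W$ established above persists in the $(R,\theta)$ picture. Replacing $\theta$ by $\theta+2\pi$ in the formula for $\Sigma$ leaves the $W$-term untouched, while $\tfrac12 R^{2}(\Theta+2\pi)-\tfrac12 R^{2}(\theta+2\pi)$ differs from $\tfrac12 R^{2}\Theta-\tfrac12 R^{2}\theta$ only by $\pi R^{2}-\pi R^{2}=0$; hence $\Sigma\circ T=\Sigma$. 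The whole argument is routine; the only thing demanding care is bookkeeping — computing the pullback and the exterior derivative in one consistent coordinate system and applying the commutation relations for $R$ and $\Theta$ correctly — so I do not expect a genuine obstacle.
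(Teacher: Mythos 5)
Your proof is correct: the identity $d\Sigma=\tfrac12 R^{2}\,d\Theta-\tfrac12 r^{2}\,d\theta=\Phi^{*}\beta-\beta$ follows exactly as you compute from $dW=\lambda$, and the $T$-invariance follows from $R\circ T=R$, $\Theta\circ T=\Theta+2\pi$ and $W\circ T=W$. The paper does not reprove this statement but cites Section 2.6 of \cite{abbondandolo2018sharp}, where the argument is the same direct computation you give, so your write-up simply supplies the verification the paper outsources.
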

Note that $\Sigma$ and $W$ are unique up addition of a constant.

We now prove a technical result that plays an important role in proving Theorem \ref{ps3}.
\begin{lem} \label{lem} Let $W:[\frac{1}{2},1]\times \mathbb{R}\rightarrow \mathbb{R} $ and $V:[1,1+\beta]\times \mathbb{R}\rightarrow \mathbb{R}$ be smooth functions with the following properties:
\begin{itemize}
    \item $W\circ T=W$ and  $V\circ T=V,$
    \item $
   \hat{W}(r,\theta)= 
\begin{cases}
\  V(r,\theta),& \text{if } 1\leq r\leq 1+\beta \\
    W(r,\theta),              &  \text{if } r\leq 1
\end{cases}$ is a $C^{1}$ function, 
\item $D_{12}\hat{W}$ is continuous.
\end{itemize}
Then given any $\delta>0$ there exists a smooth function $Y:[\frac{1}{2},1+\beta]\times \mathbb{R}\to \mathbb{R}$ with the following properties:
\begin{enumerate}
    \item $Y\circ T=Y,$
    \item $Y\big|_{r\leq 1}=W,$
    \item $||\hat{W}-Y||_{C^{1}}+||D_{12}(\hat{W})-D_{12}(Y)||_{C^{0}}<\delta,$
    \item $Y\big|_{1+\frac{\beta}{2}\leq r\leq 1+\beta}=V.$
\end{enumerate}
\end{lem}
The proof of Lemma \ref{lem} is based on the following  classical approximation theorem \cite{whitney1934analytic} due to Whitney. 

\begin{thm}\label{Whit}
Let $M\subset\mathbb{R}^{2}$ be a
compact smooth submanifold with boundary.  Suppose that  $F\colon M \to  \mathbb{R}$ is a $C^{1}$ function  and that $D_{12}F$ exists and is continuous. 
Given any $\delta>0,$ there exists a $C^{\infty}$  smooth
function $\hat{F}:M\to\mathbb{R}$ such that $$||\hat{F}-F||_{C^{1}}+||D_{12}\hat{F}-D_{12}F||_{C^{0}}\leq \delta.$$ If $F$ is $C^{\infty}$ smooth  on a closed subset  $ \partial M\subset A\subset M,$
then $\hat{F}$ can be chosen to be equal to $F$ on $A.$
\end{thm}
\begin{rem}
We say $F:M\to R$ is smooth on $A\subset M$ if it has a smooth extension in a neighborhood of
each point of $A$.
\end{rem}
For the continuous version of Theorem \ref{Whit} see Theorem 6.21 in \cite{lee2013smooth}.

Let us denote by $\mathbb{D}(r)\subset\mathbb{R}^{2}$ the closed disc of radius $r>0$  which shares center with $\mathbb{D}$ and $A_{[r_{1},r_{2}]}=D(r_{2})\setminus \mathring{D}(r_{1}).$ Define a smooth map $\rho$ by 
$$\rho\colon [0,2]\times\mathbb{R}\to \mathbb{D}(2),$$
$$\rho(r,\theta)=re^{i\theta}$$
which restricts to a smooth covering map from $(0,2]\times \mathbb{R}$ to  the punctured disc $\mathbb{D}(2)\setminus\{0\}.$
\begin{proof}[Proof of Lemma \ref{lem}]  Because $\hat{W}\circ T=\hat{W}$ then there exists a function $W_{p}:A_{[\frac{1}{2},1+\beta]}\to \mathbb{R},$ such that  $W_{p}\circ \rho=\hat{W}.$  By assumptions on $\hat{W},$ the function
$W_{\rho}$ is $C^{1}$ and $D_{12}W_{\rho}$ is continuous on $A_{[\frac{1}{2},{1+\beta}]}.$ Additionally $W_{\rho}$ is  $C^{\infty}$ on $$A=A_{[\frac{1}{2},1]}\cup A_{[1+\frac{\beta}{2},1+\beta]}=\{(r,\theta)\in\mathbb{D}(1+\beta): \frac{1}{2}\leq r\leq 1 \ \mbox{and} \  1+\frac{\beta}{2}\leq r\leq 1+\beta\}.$$
Now, we apply Theorem \ref{Whit} to: the function $W_{\rho},$ and  as closed set $A$ as above and  $\delta>0.$
Then we get a smooth function $\hat{Y}$ with the following properties:
\begin{enumerate}
    \item $\hat{Y}\big|_{A}=W_{\rho},$
    \item $||W_{\rho}-\hat{Y}||_{C^{1}}+||D_{12}(W_{\rho})-D_{12}(\hat{Y})||_{C^{0}}<\delta.$
\end{enumerate}
By constructions, $Y=\hat{Y}\circ \rho$ fulfills all the properties we need.\end{proof}
\section{Proof of Theorem \ref{ps3}}
We now  prove the Theorem \ref{ps3} using  the previous lemmas.
\begin{proof}[Proof of Theorem \ref{ps3}]
 Assume 
$$\phi\big|_{\partial \mathbb{D}}=(1,g(\theta)).$$  By the area-preserving property of $\phi$, we know that $Dg: TS^{1}\to TS^{1}$ is positive. Because if $Dg$ negative, then the map $g:S^{1}\to S^{1}$ is the orientation-reversing diffeomorphism, which contradicts the orientation-preserving property of $\phi.$ By compactness of $S^{1},$ there exists a  positive $\epsilon$ such that 
\begin{align}\label{bound}
 \frac{1}{\epsilon}>||Dg||>\epsilon.
\end{align}
We now start to apply the properties given above to our situation. It is clear that one can find a smooth $\omega$-preserving diffeomorphism $\psi:\mathbb{D}\to \mathbb{D}$ which coincides with $\phi$ near the boundary of $\mathbb{D}$ and so that every $x\in \mathbb{D}(\delta)$ is a fixed point of $\psi$ for some sufficiently small $\delta>0.$ For example since $\phi$ can be  generated by a Hamiltonian we can multiply this by a cut off function vanishing on $\mathbb{D}(\delta).$

One can therefore find an $\Omega$-preserving smooth diffeomorphism $$\Upsilon(r,\theta)=(R(r,\theta),\Theta(r,\theta))\colon [0,1]\times\mathbb{R} \to [0,1]\times\mathbb{R}$$ with the following properties:
\begin{enumerate}
\item $\rho\circ\Upsilon=\psi\circ \rho$
\item $\Theta\big|_{r=1}=G(\theta)$ such that $|\theta-G(\theta)|\leq 2\pi$ for every $\theta\in \mathbb{R}.$ 
\end{enumerate}
In other words, $\Upsilon$ is the lift of $\psi$ except the origin of the disc $\mathbb{D}$ satisfying property (2).

By the left hand inequality (\ref{bound}), $D_{1}R(1,\theta)> \epsilon$ for all $
\theta\in\mathbb{R},$ since $\Upsilon$ is area-preserving and $D_{2}R(1,\theta)=0,$ and therefore
\begin{align}\label{pro1}
   D_{1}R(r,\theta)>0 
\end{align} near $\{1\}\times \mathbb{R}$ by continuity.
 
By Remark \ref{nbhd} and property (\ref{pro1}), we can find a generating function $W:(1-\kappa,1]\times \mathbb{R}\to \mathbb{R}$ of $\Upsilon$ near $\{1\}\times\mathbb{R}$ and satisfying the following equalities:
\begin{equation}\label{gen4}
    D_{1}W\big|_{r=1}=R(\theta-\Theta)\big|_{r=1}=(\theta-\Theta)\big|_{r=1}=\theta-G(\theta)
\end{equation}
and
 \begin{equation}\label{gen3}
     D_{2}W\big|_{r=1}=\frac{R^{2}-r^{2}}{2}\big|_{r=1}=0
 \end{equation}
on $\{1\}\times\mathbb{R}.$ 

By equation (\ref{gen3}), we know $W$ is constant on $\{1\}\times\mathbb{R}.$ So we assume 
\begin{equation}\label{onboun1}  
W\big|_{\{1\}\times\mathbb{R}}=0.
\end{equation}
For any $\beta,\delta>0,$ we can find a smooth function $f_{(\delta,\beta)}:[1,1+\beta]\to \mathbb{R}$ with the following properties:
\begin{enumerate}
    \item $f_{(\delta,\beta)}(1)=f_{(\delta,\beta)}(1+\beta)=0,$
    \item $f'_{(\delta,\beta)}([1,1+\beta])\subset (-\delta,1],$
    \item $f'_{(\delta,\beta)}(1)=1,$ 
    \item $f'_{(\delta,\beta)}(y)=0$  for every $y$  near $1+\beta.$
\end{enumerate}
We choose a smooth function $V_{\beta}(r,\theta):[1,1+\beta]\times \mathbb{R}\to \mathbb{R}$ in the following form $$V_{\beta}(r,\theta)=f_{(\epsilon,\beta)}(r)\cdot(\theta-G(\theta)).$$
\begin{claim}\label{bound1} The following inequalities:
\begin{enumerate}
    \item $|V_{\beta}|\leq 2\beta\pi,$
    \item $|D_{1}V_{\beta}|\leq 2\pi,$
    \item $D_{12}V_{\beta}-1\leq -\epsilon$
\end{enumerate}
hold.
\end{claim}
\begin{claimproof} The first and second inequalities follow easily from properties (1) and (2) of $f_{(\epsilon, \beta)}.$ Now, we prove the third inequality of the claim.

By estimate (\ref{bound}), we have the following inequality:
$$1-\epsilon>1-G'(\theta)>-\frac{1-\epsilon}{\epsilon}.$$
By property (2) of the  function $f_{(\epsilon,\beta)}(r),$  we have two cases $1\geq f_{(\epsilon,\beta)}'(r)\geq0$  and $~{0>f_{(\epsilon,\beta)}'(r)>-\epsilon.}$ 

\textbf{Case 1} If $1\geq f_{(\epsilon,\beta)}'(r)\geq 0,$ then 
$$D_{12}V-1=f_{(\epsilon,\beta)}'(r)(1-G'(\theta))-1\leq f_{(\epsilon,\beta)}'(r)(1-\epsilon)-1\leq 1-\epsilon-1=-\epsilon.$$

\textbf{Case 2} If $0>f_{(\epsilon,\beta)}'(r)>-\epsilon,$ then $$D_{12}V-1=f_{(\epsilon,\beta)}'(r)(1-G'(\theta))-1\leq -f_{(\epsilon,\beta)}'(r)\frac{1-\epsilon}{\epsilon}-1\leq 1-\epsilon-1=-\epsilon.$$\end{claimproof}

Now consider the following extension: $\hat{W}_{\beta}(R,\theta):(1-\kappa,1+\beta]\times \mathbb{R}\to \mathbb{R}$ given by
\[
   \hat{W}_{\beta}(R,\theta)= 
\begin{cases}
  V_{\beta}, & \text{if } 1\leq R\leq 1+\beta, \\
    W(R,\theta),              &  \text{if } R\leq 1.
\end{cases}
\]
Using equality (\ref{onboun1}) together with equations (\ref{gen3}) and (\ref{gen4}), one can easily show that  $\hat{W}_{\beta}(R,\theta)$ is a $C^{1}$ function and $D_{12}\hat{W}_{\beta}(R,\theta)$ is continuous. 

From now on, we assume $\beta \in (0,1).$

By Lemma \ref{lem} for $\delta=\min \big\{\epsilon,\frac{6(1-\beta)\pi}{3+\beta}\big\},$ there exists a smooth function $Y_{\beta}(r,\theta)$ such that 
\begin{enumerate}
    \item $d_{C^{1}}(\hat{W}_{\beta},Y_{\beta})+d_{C^{0}}(D_{12}(\hat{W}_{\beta})-D_{12}(Y_{\beta}))<\delta,$
    \item $Y_{\beta}\big|_{r\leq 1}=W,$
    \item $Y_{\beta}\big|_{r\geq 1+\frac{\beta}{2}}=V_{\beta},$
    \item $Y_{\beta}\circ T=Y_{\beta}.$
  \end{enumerate}
We now show that equations \ref{2.1}  and \ref{2.2},where $W=Y_{\beta}$, have a unique solution.

By Claim \ref{bound1}, we get 
$$D_{12}Y_{\beta}({R},\theta)-R\leq |D_{12}(Y_{\beta}({R},\theta)-V_{\beta}({R},\theta))|+D_{12}V_{\beta}({R},\theta)-{R}\leq $$
$$|D_{12}(Y_{\beta}({R},\theta)-V_{\beta}({R},\theta))|+D_{12}V_{\beta}({R},\theta)-1<\epsilon-\epsilon=0.$$
Therefore, by the Implicit function theorem, equations \ref{2.1}  and \ref{2.2}  have a unique solution where $W=Y_{\beta}$ and such solution defines a smooth area-preserving diffeomorphism $$\Phi_{\beta}(r,\theta)=(R_{\beta},{\Theta}_{\beta}):[1,1+\beta]\times \mathbb{R} \to [1,1+\beta]\times \mathbb{R}$$ with the following properties:
\begin{enumerate}
    \item $\Phi_{\beta}\big|_{r=1}=(1,g(\theta)),$
    \item $\Phi_{\beta}(x)=x$ for every $x$  near $\{1+\beta\}\times \mathbb{R},$
    \item $\Phi_{\beta}\circ T=T\circ \Phi_{\beta}.$
\end{enumerate}

From property (3) of $\Phi_{\beta},$ there exists a diffeomorphism $\hat{\phi}_{\beta} :A_{[1,1+\beta]}\to A_{[1,1+\beta]}$ such that  $$\hat{\phi}_{\beta}\circ\rho=\rho\circ{\Phi}_{\beta}.$$

The properties of $Y_{\beta}$ imply
\[
   \psi_{\beta}= 
\begin{cases}
   \hat{\phi}_{\beta},& \text{if } 1\leq r\leq 1+\beta, \\
   \psi,              &  \text{if } r\leq 1
\end{cases}
\]
is smooth. By construction of $\psi$, the map \[
   \phi_{\beta}= 
\begin{cases}
    \hat{\phi}_{\beta},& \text{if } 1\leq r\leq 1+\beta, \\
   \phi,              &  \text{if } r\leq 1
\end{cases}
\] is also smooth.

One can show the action of the diffeomorphism $\phi_{\beta}\colon\mathbb{D}(1+\beta)\to \mathbb{D}(1+\beta)$ is
\begin{equation}\label{imp}
  f_{({\phi}_{\beta},0)}= 
\begin{cases}
    B_{\beta},& \text{if } 1\leq r\leq 1+\beta, \\
   f_{(\phi,-\pi\rho(g))},              &  \text{if } r\leq 1,
\end{cases}
\end{equation}
where by Theorem \ref{ac}, $$B_{\beta}\circ \rho=\hat{B}_{\beta}:=Y_{\beta}({R}_{\beta},\theta)+\frac{{R}_{\beta}^{2}{\Theta}_{\beta}-{R}_{\beta}^{2}\theta}{2}.$$
By equality (\ref{imp}), we have the following 
\begin{align}\label{fixedbound}
\inf\bigg\{ A_{({\phi}_{\beta},0)}(x) | x\in P({\phi}_{\beta})\big|_{r\leq 1}\bigg\}=\inf\bigg\{ A_{({\phi},-\pi\rho(g))}(x) | x\in P(\phi)\bigg\}.
\end{align}
We now give lower and upper bounds for the action and  the Calabi invariant of the diffeomorphisms $\phi_{\beta}:\mathbb{D}(1+\beta)\to\mathbb{D}(1+\beta)$. 

\begin{claim}\label{Claim3} For every $\beta\in(0,1)$ and $r\in[1,1+\beta]$ the following the inequality
\begin{align}\label{bound2}
    |\hat{B}_{\beta}(r,\theta)|\leq 4\pi
\end{align} holds. Moreover,
\begin{align}\label{bound3}
  \bigg|\int_{[1,1+\beta]\times [0,2\pi]}\hat{B}_{\beta}rdr\wedge\theta\bigg| \leq 4\pi^{2}\beta(2+\beta).  
\end{align}
\end{claim}
\begin{claimproof}
Suppose $1>\beta>0$ and $1+\beta \geq r\geq 1.$  We know that $$|\hat{B}_{\beta}(r,\theta)|=$$
$$|Y_{\beta}({R}_{\beta},\theta)+\frac{D_{1}Y_{\beta}({R}_{\beta},\theta){R}_{\beta}}{2}|{\leq} |Y_{\beta}|+\frac{|D_{1}Y_{\beta}|(1+\beta)}{2}\leq$$
$$|Y_{\beta}-V_{\beta}|+|V_{\beta}|+\frac{|D_{1}Y_{\beta}-D_{1}V_{\beta}|(1+\beta)}{2}+\frac{|D_{1}V_{\beta}|(1+\beta)}{2}\overset{\ By \ Claim \ \ref{bound1}}{\leq}$$
$$|Y_{\beta}-V_{\beta}| + 2\beta\pi + \frac{|D_{1}Y_{\beta}-D_{1}V_{\beta}|(1+\beta)}{2}+\frac{2\pi(1+\beta)}{2} \overset{\ By \ property \ (1) \ of \ Y_{\beta} }{<}$$
$$\frac{6(1-\beta)\pi}{3+\beta}+2\beta\pi+\frac{3(1-\beta)(1+\beta)\pi}{3+\beta}+\pi(1+\beta)=4\pi.$$
We use the first part of claim to prove the second part:
$$\bigg|\int_{[1,1+\beta]\times [0,2\pi]}\hat{B}_{\beta}rdr\wedge\theta\bigg|\leq  \int_{[1,1+\beta]\times [0,2\pi]}|\hat{B}_{\beta}r|dr\wedge\theta\leq \ \ 4\pi  \int_{[1,1+\beta]\times [0,2\pi]}rdr\wedge\theta=4\pi^{2}\beta(2+\beta).$$\end{claimproof}

By inequality (\ref{bound2}) for every $\beta\in(0,1)$ and $r\in [1,1+\beta]$,  we have the following inequality 
\begin{align}\label{asympbound}
   |A^{\infty}_{({\phi}_{\beta},0)}(x)|\leq 4\pi.
\end{align}
Because 
$$|A^{\infty}_{({\phi}_{\beta},0)}(x)|\leq |\lim_{n\to\infty}\frac{\sum_{i=0}^{n}B({\phi}_{\beta}^{i}(x))}{n+1}|\leq \lim_{n\to\infty}\frac{\sum_{i=0}^{n}|B({\phi}_{\beta}^{i}(x))|}{n+1}\leq 4\pi.$$
Another useful inequality is the following:
\begin{align}\label{perbound}
   \inf\bigg\{ A^{\infty}_{({\phi}_{\beta},0)}(x)| x\in \{r\geq 1\}\bigg\}\leq \inf\bigg\{ A_{({\phi}_{\beta},0)}(x)| x\in P({\phi}_{\beta})\big|_{r\geq 1}\bigg\}. 
\end{align}
Applying estimate (\ref{bound3}), we get
\begin{align}\label{Calbound}
   \mathcal{V}{({\phi}_{\beta},0)}=\frac{1}{(1+\beta)^{2}\pi}\int_{[1,1+\beta]\times [0,2\pi]}f_{({\phi}_{\beta},0)}rdr\wedge\theta=
 \end{align}
$$\frac{1}{(1+\beta)^{2}}\mathcal{V}{(\phi,-\pi\rho(g))}+\frac{1}{(1+\beta)^{2}\pi}\int_{[1,1+\beta]\times [0,2\pi]}\hat{B}rdr\wedge\theta$$
$$\leq \frac{1}{(1+\beta)^{2}}\mathcal{V}{(\phi,-\pi\rho(g))}+\frac{4\pi\beta(2+\beta)}{(1+\beta)^{2}}<-4\pi $$
an estimate  for the Calabi invariant of the diffeomorphisms $\phi_{\beta}$ restricted to the disc $\mathbb{D}(1+\beta).$

By inequalities (\ref{Calbound}) and (\ref{asympbound}), we have
\begin{align}\label{asympbound1}
    \mathcal{V}{({\phi}_{\beta},0)}<-4\pi \leq \inf\bigg\{ A^{\infty}_{({\phi}_{\beta},0)}(x)| x\in \{r\geq 1\}\bigg\}.
\end{align}
By construction of $Y_{\beta},$ the diffeomorphism ${\phi}_{\beta}$ is identity near the boundary of $\mathbb{D}(1+\beta)$ and $\mathcal{V}({\phi}_{\beta}, 0)<-4\pi<0.$  Hence, by Theorem \ref{hut} \begin{align}\label{hut1}
    \inf\bigg\{ A_{({\phi}_{\beta},0)}(x)| x\in P({\phi}_{\beta})\bigg\}{\leq} \mathcal{V}({\phi}_{\beta}, 0).
\end{align}
Summarizing inequalities above, we get
$$-4\pi>\frac{1}{(1+\beta)^{2}}\mathcal{V}{(\phi,-\pi\rho(g))}+\frac{4\pi\beta(2+\beta)}{(1+\beta)^{2}}\overset{ \  By \  (\ref{Calbound})}{\geq}$$
$$\mathcal{V}({\phi}_{\beta}, 0)\overset{ \  By  \   (\ref{hut1})}{\geq}\inf\bigg\{ A_{({\phi}_{\beta},0)}(x)| x\in P({\phi}_{\beta})\bigg\}  =$$ 
$$ \min\bigg\{\inf\Big\{ A_{({\phi}_{\beta},0)}(x)| x\in P({\phi}_{\beta})\big|_{r\leq 1}\Big\}, \inf\Big\{ A_{({\phi}_{\beta},0)}(x)| x\in P({\phi}_{\beta})\big|_{r\geq 1}\Big\}\bigg\}\overset{ \   \ By \ (\ref{perbound})}{\geq}$$
$$\min\bigg\{\inf\Big\{ A_{({\phi}_{\beta},0)}(x)| x\in P({\phi}_{\beta})\big|_{r\leq 1}\Big\}, \inf\Big\{ A^{\infty}_{({\phi}_{\beta},0)}(x)| x\in \{r\geq 1\}\Big\}\bigg\}\overset{ \  By \  (\ref{asympbound1})}{=}$$
$$\inf\Big\{ A_{({\phi},-\pi\rho(g))}(x) | x\in P(\phi)\Big\}\overset{ \  By \  (\ref{fixedbound})}{=}\inf\Big\{ A_{({\phi}_{\beta},0)}(x)| x\in P({\phi}_{\beta})\big|_{r\leq 1}\Big\}$$
Taking the limit $\beta\to 0,$ we obtain
$$\inf\Big\{ A_{(\phi,-\pi\rho(g)))}(x)| x\in P(\phi)\Big\}\leq \mathcal{V}(\phi, -\pi\rho(g))).$$
Equivalently,
$$\inf\Big\{ A_{(\phi,0)}(x)| x\in P(\phi)\Big\}\leq \mathcal{V}(\phi, 0).$$\end{proof}




\bibliographystyle{amsalpha}
\bibliography{main}

\end{document}